\theoremstyle{plain}
\setlist{itemsep=3pt,parsep=0pt,topsep=2pt,partopsep=0pt}  
\setlist{leftmargin=2.5\parindent} 
\def\endofClaim{\hfill\scalebox{.6}{$\Box$}}
\let\subset\subseteq  
\let\eps\varepsilon 
\let\rho\varrho 
\def\cR{\mathcal{R}}
\def\leq{\leqslant}
\def\geq{\geqslant}
 \newtheorem*{theorem*}{Theorem}
\newtheorem{theorem}{Theorem}
\newtheorem{prop}[theorem] {Proposition}
\newtheorem{conj}[theorem]{Conjecture}
\newtheorem{claim}[theorem]{Claim} 
\theoremstyle{definition}
\theoremstyle{remark}
\newcommand{\oldqed}{}
\newenvironment{claimproof}[1][Proof]{
  \renewcommand{\oldqed}{\qedsymbol}
  \renewcommand{\qedsymbol}{\endofClaim}
  \begin{proof}[#1]
}{
  \end{proof}
  \renewcommand{\qedsymbol}{\oldqed}
}
\def\NN{\mathbb{N}}
\title{Ramsey Numbers of Connected Clique Matchings}
\author{Barnaby Roberts}
\address{Department of Mathematics, London School of Economics, Houghton Street, London
WC2A 2AE, U.K.}
\email{b.j.roberts@lse.ac.uk}
\date{\today}
\begin{document}

\begin{abstract}
We determine the Ramsey number of a connected clique matching. That is, we show that if $G$ is a $2$-edge-coloured 
complete graph on $(r^2-r-1)n-r+1$ vertices, then there is a 
monochromatic connected subgraph containing $n$ disjoint copies of 
$K_r$, and that this number of vertices cannot be reduced.
\end{abstract}
\maketitle
\section{Introduction}
For a graph $G$, the Ramsey number $R(G)$ is defined to be the smallest integer $N$ such that every 2-colouring of the edges of the complete graph on $N$ vertices contains a monochromatic subgraph isomorphic to $G$.
The most fundamental problem in Ramsey theory is determining the order of magnitude of the Ramsey numbers of cliques.
An exponential upper bound was given by Erd\H{o}s and Szekeres \cite{ErdSzek}, and in an early use of the probabilistic method Erd\H{o}s \cite{ErdLB} gave an exponential lower bound.
In spite of progress being made on both upper and lower bounds (see \cite{ConUB,SpenLB}) even the size of the exponent is not known asymptotically.
It becomes easier however to look for multiple copies of the cliques.

The Ramsey numbers of multiple copies of graphs were studied in \cite{BurrErdSpen} by Burr, Erd\H{o}s, and Spencer who determined the Ramsey number of $nK_3$ exactly and of multiple copies of a general graph $G$ up to a constant depending only on $G$.
In particular they showed $R(nK_3)=5n$ and for $r\geq 4$ that $(2r-1)n-1 \leq R(nK_r)\leq (2r-1)n +C_r$, determining the Ramsey number of a $K_r$-matching up to a constant.

The aim of this note is to add a connectivity requirement, studying the Ramsey numbers of connected copies of cliques.
Although not technically a Ramsey number by the definition given above, we let $R(c(nH))$ denote the least $N$ such that every $2$-colouring of the edges of $K_N$ contains a monochromatic copy of $nH$ in a connected component of the same colour.
This was first studied by Gy\'arf\'as and S\'ark\"{o}zy \cite{GyaSark} who solved the problem for $H=K_3$ showing $R(c(nK_3))=7n-2$.
We solve the problem for all larger cliques.

\begin{theorem}\label{thm:main}
For $r\geq 4$ and $n \geq R(K_r)$ we have \[R(c(nK_r)) = (r^2 -r +1)n -r +1\,. \]
\end{theorem}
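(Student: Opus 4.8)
The plan is to prove the two bounds separately, and for the upper bound to reduce the problem to the component structure of the two colour classes rather than to invoke regularity: the regularity method would only handle $n$ beyond a tower-type threshold, whereas the stated range $n\ge R(K_r)$ calls for an exact, finitary argument. Two ingredients drive everything. The first is the classical fact that in any $2$-colouring of a complete graph at least one colour class is spanning and connected; more precisely, if one colour class is disconnected, then between any two of its components every edge has the other colour, so the other class contains a complete multipartite graph on those components and is therefore spanning and connected. The second is a \emph{leftover lemma}: if $S$ is a vertex set on which, say, blue spans no $K_r$, then a maximum red $K_r$-tiling of $S$ leaves an uncovered set carrying no monochromatic $K_r$ in either colour, hence of size less than $R(K_r)$, so the tiling covers more than $|S|-R(K_r)$ vertices. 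This is exactly where $n\ge R(K_r)$ enters, as it makes the $R(K_r)$ error negligible against the linear-in-$n$ quantities.

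For the upper bound, take a colouring of $K_N$ with $N=(r^2-r+1)n-r+1$ and assume there is no monochromatic connected $nK_r$; I aim for a contradiction. Let $a$ (resp.\ $b$) be the largest number of disjoint red (resp.\ blue) $K_r$'s inside a single red (resp.\ blue) component, so $a,b\le n-1$. If both colours are spanning and connected, take a maximum red $K_r$-tiling (size $\le n-1$); its uncovered set $U$ has $|U|\ge (r-1)^2n+1$ and no red $K_r$, so the leftover lemma gives at least $n$ disjoint blue $K_r$'s in $U$, all inside the single blue component -- a contradiction. Hence some colour is disconnected; say blue is, with components $D_1,\dots,D_l$, so red is spanning and contains the complete $l$-partite graph on the $D_i$. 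If $l\ge r$ this multipartite graph already forces at least $\min\bigl(\lfloor N/r\rfloor,\,N-\max_i|D_i|\bigr)$ disjoint red $K_r$'s; since $\lfloor N/r\rfloor\ge n$, avoiding $a\ge n$ forces a giant component $D_1$ with $N-|D_1|\le n-1$, and then applying the leftover lemma inside $D_1$ (where blue has few $K_r$'s) yields more than $n-1$ disjoint red $K_r$'s in the spanning red component, again contradicting $a\le n-1$ for $r\ge 4$.

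This leaves the core case $2\le l\le r-1$ (and, by symmetry, the same with the colours exchanged). Here $l<r$, so every red $K_r$ must \emph{reuse} a class: since all cross-class edges are red, a red $K_r$ is precisely a disjoint union of red cliques, at most one from each $D_i$, of total size $r$, and with only $l\le r-1$ classes such a clique needs at least one internal red edge. The crux is to bound the global red tiling $a$ from below in terms of the $|D_i|$ and the internal red structure. The clean way to see it is as a packing/LP: partition each $D_i$ into red cliques (equivalently, properly colour its blue graph), regard these cliques as items grouped by class, and pack items -- at most one per class per group -- into groups of total size $r$; the maximum number of groups is $a$. The key point is that internal red \emph{edges} (not only internal red $K_r$'s) create cross-class $K_r$'s, so $a$ far exceeds the sum of the within-class red tilings, and it is exactly this cross-class gain that drives the bound down from the naive $\approx r^2n$ to the sharp $(r^2-r+1)n$. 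Combining the resulting lower bound on $a$ with the per-class blue constraints (each class has fewer than $n$ disjoint blue $K_r$'s, as each is at most $b\le n-1$) -- noting the asymmetry that the blue budget is separate per class while the red budget $n-1$ is shared across all classes -- and optimising over the $|D_i|$ should give $N\le (r^2-r+1)n-r$, the desired contradiction.

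For the matching lower bound I would exhibit the optimiser of that LP as an explicit colouring of $K_{(r^2-r+1)n-r}$: take $r-1$ blue components, each of size about $rn+\tfrac{n}{r-1}$, and equip each with a calibrated amount of internal red structure so that (i) each component's blue $K_r$-tiling stays below $n$, and (ii) the internal red edges are \emph{aligned across classes} so that, although red is spanning, no $n$ of the cross-class red $K_r$'s are pairwise disjoint; here again $n\ge R(K_r)$ guarantees the small monochromatic-$K_r$-free remainders help neither colour. The main obstacle is precisely this core case $2\le l\le r-1$: proving the sharp lower bound on the global red tiling that correctly accounts for cross-class cliques, and producing a construction that simultaneously saturates the per-class blue constraint and the shared cross-class red constraint so as to hit the constant exactly. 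The earlier cases are comparatively soft; it is this balancing of a single shared (red) budget against several separate (blue) budgets that forces the value $r^2-r+1=r(r-1)+1$, together with the precise additive term $-r+1$.
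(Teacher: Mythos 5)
Your overall shape (an explicit extremal colouring for the lower bound; for the upper bound, a case split on which colour class is disconnected, plus a leftover lemma exploiting $n \ge R(K_r)$) is reasonable, and your first case (both colours connected) is correct. But the proof has a genuine hole exactly where you flag it: the core case $2\le l\le r-1$ is not an obstacle left to routine optimisation --- it \emph{is} the theorem, and nothing in your proposal proves it. The per-class argument actually available to you (inside each blue component $D_i$, delete a maximum blue tiling of at most $r(n-1)$ vertices, then apply the leftover lemma to tile the rest in red) only produces internal red $K_r$'s, roughly $(N-(r-1)(rn+R(K_r)))/r$ of them, which forces only $N\lesssim r^2n$ --- short of $(r^2-r+1)n$ by about $(r-1)n$. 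Closing that gap is precisely the cross-class gain you gesture at, and extracting it needs structural information, not just an LP over clique partitions. The paper gets this rigidity by taking the maximal red tiling $\cR$ \emph{first} and only then covering the remainder greedily by blue $K_r$'s: maximality of $\cR$ forces the leftover blue structure to be a union of exactly $r-1$ \emph{blue cliques} $B_1,\dots,B_{r-1}$ with all edges red between them; it then classifies how each clique of $\cR$ attaches to the $B_i$ (Claim~\ref{cl:cliquetypes}), shows that triples paired with the same $B_i$ see each other in blue (Claim~\ref{cl:bluebetween}), and finishes with a refined averaging (Claim~\ref{cl:advave}) and a greedy embedding with one delicate special case. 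None of that structure is available for your $D_i$, which are arbitrary blue components. Moreover, your ``soft'' case $l\ge r$ rests on a false formula: in a complete multipartite graph the number of disjoint $K_r$'s is \emph{not} at least $\min\bigl(\lfloor N/r\rfloor,\,N-\max_i|D_i|\bigr)$; with parts of sizes $N-3,1,1,1$ and $r=4$ the formula promises $3$ but only one $K_4$ exists. The correct bound is $\min_{0\le k\le r-1}\lfloor (N-n_1-\dots-n_k)/(r-k)\rfloor$ with parts ordered decreasingly, so failure can be caused by up to $r-1$ large parts rather than a single giant one, and then the $l\ge r$ case collapses into the same unresolved core case. (Your giant-component subcase also applies the leftover lemma where blue need not be $K_r$-free; that one is fixable by first deleting a maximum blue tiling of $D_1$.)

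The lower bound is also not done: you never exhibit the colouring, and the sketch cannot work as described, since $r-1$ components of size about $rn+\tfrac{n}{r-1}$ total about $(r^2-r+1)n$ vertices --- more than any colouring avoiding both monochromatic connected $nK_r$'s can have (at most $(r^2-r+1)n-r$, by the very theorem being proved). The correct construction (Burr's, as in the paper) is much simpler and needs no ``calibrated internal red structure'': take $r-1$ sets $X_1,\dots,X_{r-1}$ of size $rn-1$ and one set $Y$ of size $n-1$, colour all edges inside each set blue and all edges between sets red. Every blue component then has fewer than $rn$ vertices, and the red graph is $r$-partite with one part of size only $n-1$, so it contains no $n$ disjoint $K_r$'s at all. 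In summary: your case of two connected colours is fine, your case $l\ge r$ is broken but repairable, and the core case together with the lower bound --- which constitute essentially all of the theorem --- are missing.
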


The connectivity requirement here proves to be very significant as for large values of $r$ the Ramsey number of a connected $K_r$-matching is approximately $r/2$ times larger than that of a standard $K_r$-matching.

The lower bound in Theorem~\ref{thm:main} is a special case of the following observation of Burr~\cite{Burr}.
\begin{prop}\label{prop:Burr}
For every connected graph $G$ containing at least one edge
\begin{equation*}
R(G) \geq (\chi(G)-1)(|G|-1) + \sigma(G)
\end{equation*}
where $\sigma(G)$ is the smallest size of a colour class over all $\chi(G)$-proper-colourings of $G$.
\end{prop}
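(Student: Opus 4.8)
The plan is to construct an explicit 2-colouring of the edges of the complete graph on $N = (\chi(G)-1)(|G|-1) + \sigma(G) - 1$ vertices that avoids any monochromatic connected copy of $G$, thereby showing $R(G) > N$. The natural construction is to partition the vertex set into blocks and colour edges according to whether they lie within or between blocks, so that each monochromatic component is too small to accommodate $G$ under the colouring. Specifically, I would take $\chi = \chi(G)$ blocks, where $\chi-1$ of them have size $|G|-1$ and one distinguished block has the remaining $\sigma(G) - 1$ vertices, and colour an edge \emph{red} if it lies inside a block and \emph{blue} if it joins two different blocks (or the reverse).

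First I would analyse the blue components. With the between-block edges blue, any two vertices in distinct blocks are joined, so the blue graph is connected across the whole vertex set; this does not immediately help, so the construction must be arranged so that the blue graph, while connected, still fails to contain a copy of $G$. The cleaner route is to reverse the roles: colour \emph{between-block} edges with the colour that forces small components, and exploit that $G$ is connected so a monochromatic copy must live inside a single connected monochromatic component. The key observation is that if $G$ embeds monochromatically, then because $\chi(G)=\chi$, any copy of $G$ must use vertices from at least $\chi$ of the colour classes induced by the partition, and counting how many vertices of $G$ can be placed forces a contradiction with the block sizes. The role of $\sigma(G)$ enters precisely here: $\sigma(G)$ is the least size of a smallest colour class over all proper $\chi$-colourings, so in any proper $\chi$-colouring of an embedded $G$ at least one part receives at least $\sigma(G)$ vertices, and making the distinguished block have only $\sigma(G)-1$ vertices blocks this.

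Carrying this out, I would argue as follows. Suppose for contradiction that the colouring contains a monochromatic connected copy of $G$, say in red. Its vertices, intersected with the block partition, induce a proper $\chi$-colouring of that copy of $G$ (since within each block the copy is an independent set in the colour under consideration, by the way edges were coloured). By definition of $\sigma(G)$, the smallest of these $\chi$ colour classes has size at least $\sigma(G)$, and the other $\chi-1$ classes each have size at most $|G|-1$ bounded by the block sizes; summing the block capacities gives at most $(\chi-1)(|G|-1) + (\sigma(G)-1) = N$ available vertices distributed so that one class can hold at most $\sigma(G)-1$, contradicting the lower bound of $\sigma(G)$ forced by the chromatic structure. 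The same analysis applies to the other colour by symmetry of the construction, so no monochromatic connected $G$ exists on $N$ vertices.

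The main obstacle is getting the colouring right so that \emph{both} colour classes simultaneously avoid a connected copy of $G$, since naively one colour yields a single spanning connected component. The resolution is that connectivity of $G$ is what makes the block structure effective: a monochromatic copy of $G$ cannot straddle components, so it suffices to ensure each potential monochromatic component, when it does span multiple blocks, induces a proper colouring whose part sizes violate the $\sigma(G)$ bound. Verifying that the induced partition of any embedded copy is genuinely a proper $\chi$-colouring — that vertices sharing a block are non-adjacent in the relevant colour — is the delicate point and is exactly where Burr's choice of the extremal quantity $\sigma(G)$ is used; once this is set up, the counting is routine.
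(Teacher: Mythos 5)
Your proposal is correct and uses exactly the paper's construction (Burr's): $\chi(G)-1$ blocks of size $|G|-1$ plus one block of size $\sigma(G)-1$, with one colour inside blocks (whose components are then too small to contain the connected graph $G$) and the other colour between blocks (where any copy of $G$ would induce a proper $\chi(G)$-colouring whose class inside the small block has size at most $\sigma(G)-1<\sigma(G)$, impossible since by definition of $\sigma(G)$ every class of such a colouring has size at least $\sigma(G)$). One caution: your closing appeal to ``symmetry of the construction'' is not literally valid --- the construction is asymmetric and the two colours require the two different arguments just described --- but since both arguments do appear earlier in your write-up, this is an expository slip rather than a genuine gap.
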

To see this partition the vertices of $K_N$, with $N=(\chi(G)-1)(|G|-1) + \sigma(G)-1$, into parts $X_1,\ldots,X_{\chi(G)-1}$ and $Y$ such that $|Y|=\sigma(G)-1$ and $|X_i|=|G|-1$ for each $i$.
Colour all edges within each part with blue and colour all edges between different parts with red.
There cannot be a blue copy of $G$ since all connected components of blue are too small.
There cannot be a copy of $G$ in red since the red edges form a $\chi(G)$-partite graph where the smallest part is too small.

This construction led Burr to conjecture that for any $\Delta,k \in \NN$ there exists an $n_0$ such that any connected graph $G$ on $n \geq n_0$ vertices with chromatic number $k$ and maximum degree at most $\Delta$ satisfies $R(G)=(k-1)(n-1) +\sigma(G)$.
Burr's conjecture was proven to be false by Graham, R\"odl, and Rucinski~\cite{GraRodRuc}.
Our result shows that Burr's conjecture does hold in the case of connected copies of cliques.
We conjecture the following
\begin{conj}
For any graph $H$ there exists $n_0(H)$ such that for all $n \geq n_0$ we have
\[R(c(nH))=(\chi(H)-1)(n|H|-1)+n\sigma(H)\,.  \]
\end{conj}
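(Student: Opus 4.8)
The plan is to follow the regularity-method framework underlying Theorem~\ref{thm:main}, upgrading it from cliques to an arbitrary fixed graph $H$. The lower bound is not in doubt: writing $k=\chi(H)$, one checks that $\chi(nH)=k$, that $|nH|=n|H|$, and that $\sigma(nH)=n\sigma(H)$. The last identity holds because aligning an optimal proper $k$-colouring of $H$ across all $n$ copies realises a global colour class of size $n\sigma(H)$, while in any proper $k$-colouring of a single copy the smallest class has size at least $\sigma(H)$, so no global colour class can be smaller than $n\sigma(H)$. Proposition~\ref{prop:Burr} applied to $G=nH$ then yields $R(c(nH))\ge (k-1)(n|H|-1)+n\sigma(H)$, exactly the conjectured value, and all the difficulty lies in the matching upper bound.

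For the upper bound I would use \L{}uczak's connected-matching method together with Szemer\'edi's Regularity Lemma. Given a $2$-colouring of $K_N$ with $N$ just above the target, apply the regularity lemma and pass to the reduced graph $\cR$, colouring each sufficiently dense regular pair by its majority colour; $\cR$ is then a nearly complete $2$-edge-coloured graph on $m$ clusters, each of size $L\approx N/m$. The core reduction is to show that $\cR$ contains a connected monochromatic subgraph $\cR'$ carrying an \emph{$H$-tileable template}: a structure that, after blow-up, produces $n$ vertex-disjoint copies of $H$ inside a single monochromatic component of $K_N$. Since $H$ embeds into the complete $k$-partite graph with parts equal to its colour classes, the natural template is a monochromatic connected subgraph of $\cR$ admitting a large fractional $K_k$-tiling whose part weights are proportional to the colour-class sizes $c_1,\dots,c_k$ of $H$. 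One then invokes a Counting/Blow-up Lemma: each monochromatic regular $K_k$ in $\cR$, with clusters of size $L$, blows up to roughly $L/\max_j c_j$ disjoint copies of $H$, and the Hajnal--Szemer\'edi theorem (or a fractional relaxation) packs these efficiently across $\cR'$.

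The heart of the argument is the purely structural statement about $\cR$: in any nearly complete $2$-edge-colouring there is a monochromatic connected subgraph whose total cluster-weight and whose bottleneck part jointly beat the Burr threshold. This is where the $n\sigma(H)$ term enters, and it is the step I expect to be hardest. In the clique case every colour class of $K_r$ has size $1$, so the template is simply a monochromatic connected subgraph containing a $K_r$, and the relevant extremal analysis is the clean one behind Theorem~\ref{thm:main}. For general $H$ the template must simultaneously respect the total budget $(k-1)(n|H|-1)$ and the scarce resource counted by $\sigma(H)$: in the extremal Burr colouring the small block $Y$ of size $n\sigma(H)-1$ must absorb one colour class from every copy, the smallest of which has size $\sigma(H)$, capping the count at $n-1$; one has to prove that any colouring avoiding a monochromatic connected $nH$ is forced into a near-copy of this configuration. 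I would therefore aim first for the asymptotic statement $R(c(nH))=\big((k-1)|H|+\sigma(H)\big)n+o(n)$ directly from regularity and the template lemma, and only then attempt the exact constant.

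Passing from the asymptotic bound to the exact formula $(\chi(H)-1)(n|H|-1)+n\sigma(H)$ is the second obstacle, since the regularity method intrinsically loses $o(n)$ vertices. The standard remedy is a stability argument: show that a colouring of $K_N$ with $N$ within $o(n)$ of the threshold and no monochromatic connected $nH$ must be $o(n)$-close to the Burr partition into $k-1$ big blocks and one small block, then run a sharp, regularity-free combinatorial analysis on this nearly structured colouring to locate the last few copies of $H$ and pin down the additive constants $-(k-1)$ and $n\sigma(H)$ exactly. Controlling this final stage for an arbitrary $H$ --- in particular handling graphs whose optimal colourings are not unique, so that $\sigma(H)$ can be realised by genuinely different block structures --- is the main reason the statement is posed here as a conjecture rather than a theorem, and is the part of the plan I am least confident can be pushed through in full generality.
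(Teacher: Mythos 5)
This statement is posed in the paper as a conjecture: the paper proves only the lower bound (via Proposition~\ref{prop:Burr}), and your verification of that half is correct and complete --- the identities $\chi(nH)=\chi(H)$, $|nH|=n|H|$ and $\sigma(nH)=n\sigma(H)$ hold for the reason you give (every colour appears in every copy of $H$ under a proper $\chi(H)$-colouring, and no class within a copy can be smaller than $\sigma(H)$), so Burr's construction gives exactly the conjectured value as a lower bound. But your upper bound is a programme, not a proof, and the gap sits precisely where you say you expect it to: the ``$H$-tileable template'' lemma for the reduced graph. For cliques every colour class has size $1$, and the needed statement is the balanced one resolved by Theorem~\ref{thm:main}; for general $H$ you need a monochromatic connected subgraph of the reduced graph supporting a fractional complete $\chi(H)$-partite tiling with part weights proportional to the \emph{unbalanced} class sizes $c_1,\dots,c_{\chi(H)}$, with total weight beating $(\chi(H)-1)|H|+\sigma(H)$ per copy. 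Note that this cannot be obtained by simply invoking the clique case on the reduced graph: embedding $nH$ into a connected clique matching via Theorem~\ref{thm:main} costs roughly $\bigl((\chi(H)-1)|H|+|H|/\chi(H)\bigr)n$ vertices, and since $\sigma(H)\le |H|/\chi(H)$ (often strictly), the conjectured threshold is genuinely smaller. So the extremal analysis against all $2$-colourings for the unbalanced template is not a routine adaptation of the paper's argument --- it is the open content of the conjecture, and your proposal states it as a goal rather than proving it. The appeal to Hajnal--Szemer\'edi does not help here either, as that theorem concerns balanced clique tilings in graphs of high minimum degree, not weighted tilings inside a monochromatic component of an adversarial colouring.

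The second unfilled step is equally essential: the regularity method loses $o(n)$ vertices, so even if the template lemma were proved you would obtain only $R(c(nH))=\bigl((\chi(H)-1)|H|+\sigma(H)\bigr)n+o(n)$, while the conjecture demands the exact constants $-(\chi(H)-1)$ and the precise coefficient $n\sigma(H)$. Your proposed stability endgame --- showing that near-extremal colourings are $o(n)$-close to the Burr partition and then finishing combinatorially --- is the standard remedy, but it is only sketched, and you correctly flag the obstruction that $\sigma(H)$ may be realised by several inequivalent optimal colourings of $H$, so the near-extremal structure need not be unique. For comparison, the paper's own proof of the clique case (Theorem~\ref{thm:main}) is regularity-free and exact from the start: it partitions $K_N$ into a maximal red $K_r$-matching plus $r-1$ blue cliques and runs a direct counting argument, an approach that exploits the fact that a maximal clique matching leaves behind such rigid structure --- a feature with no evident analogue for general $H$. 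In short: your lower bound is right, your plan identifies the correct difficulties, but both the template lemma and the exact stability analysis remain unproven, so the statement stands as a conjecture.
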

The lower bound holds by Proposition~\ref{prop:Burr}.

In \cite{GyaSark}  Gy\'arf\'as and S\'ark\"{o}zy used their method for determining $R(c(nK_3))$ along with the Regularity Lemma to estimate asymptotically the Ramsey number of a $C_n^2$ that is missing a constant number of edges.  We suspect, but have not checked all technicalities, that we could obtain a similar result, showing that for any $r$ and any $\eps>0$ there exists $c_r$ such that any $2$-colouring of $K_N$ with $N\geq (1+\eps)(r^2 - r+1)n$ contains a monochromatic copy of a $C_{rn}^{r-1}$ missing some $c_r$ edges.

There is another stronger connectivity requirement on disjoint cliques for which it is interesting to study the Ramsey numbers.
Suppose $P$ and $Q$ are copies of $K_r$.  We say they are $K_r$-connected if there exist copies $C_1,\ldots,C_t$ of $K_r$ such that in the sequence $P,C_1,\ldots,C_t,Q,$ consecutive copies of $K_r$ share $r-1$ vertices.
A construction of Allen, Brightwell, and Skokan in \cite{AllBriSko} shows that if we wish to find a monochromatic set of $n$ disjoint copies of $K_r$ which are $K_r$-connected in $K_N$ we need $N \geq nr^2 -2r +2$.
This construction was given as a lower bound for the Ramsey number $R(C_{rn}^{r-1})$ where $C_m^k$ is the $m$-th power of a cycle, obtained by joining all vertices of $C_m$ at distance at most $k$.
In \cite{AllSkoRob}, with Allen and Skokan we determine the Ramsey number $R(C^2_m)$ for large $m$ via studying the Ramsey numbers of $K_3$-connected copies of disjoint triangles.

\section{Proof of Theorem~\ref{thm:main}}
The lower bound for Theorem~\ref{thm:main} follows from the construction of Burr given in the previous section.
Before giving a proof of the upper bound we sketch the main ideas.

For $r\geq 4$ and $n\geq R(K_r)$, consider a $2$-colouring of $G=K_N$ with $N= (r^2 -r +1)n -r +1$.
Since in any $2$-colouring of a complete graph one of the colours is connected, we assume $G$ is connected in red and look for either a red copy of $nK_r$ or a blue connected copy of $nK_r$.
We then show that $G$ can be partitioned into a maximum set of disjoint red copies of $K_r$ and a set of $r-1$ large blue cliques such that between any two of the blue cliques all edges are red.
We call these blue cliques $B_1,\ldots,B_{r-1}$, and the union of their vertex sets $B$.
We let $\cR$ denote the maximum set of red copies of $K_r$, and we let $R$ denote the vertex set of $\cR$.
We then consider edges between the red copies of $K_r$ in $\cR$ and the sets $B_1,\ldots,B_{r-1}$.
We show that each clique of $\cR$ is of one of two types (see claim~\ref{cl:cliquetypes}) with regards to how the edges between that clique and $B$ are coloured.
Furthermore each type gives a way of assigning vertices of the red $K_r$ to some $B_i$ such that almost all of the edges between the assigned vertex and $B_i$ are blue.
For each $i=1,\ldots,r-1$, we let $D_i$ denote the union of $B_i$ along with the vertices of $R$ that were assigned to $B_i$.
We then use an averaging argument to show that there exists an $i$ such that $|D_i|\geq rn$ and we look for a blue $nK_r$ on this $D_i$.
Since $B_i$ was a blue clique and vertices assigned to $B_i$ were connected to $B_i$ in blue, this $nK_r$ is connected in blue.
If more than $(r-1)n$ of the vertices of $D_i$ came from $B_i$ we can find the $nK_r$ greedily.
Otherwise we use more information coming from the types of red $K_r$ vertices assigned to $B_i$ came from to find an $nK_r$.
There is one special case for which this method fails.
If this occurs we find a connected $nK_r$ in a different $D_{i'}$.

\begin{proof}[Proof of Theorem~\ref{thm:main}]
For $r\geq 4$ and $n\geq R(K_r)$, let $N= (r^2 -r +1)n -r +1$ and consider a red/blue edge colouring of $G=K_N$.
Any $2$-colouring of $K_N$ is connected in one of the colour, since if Blue has more than one connected component, Red contains a complete multi-partite graph between blue components which connects the graph in red.
Without loss of generality we assume $G$ is connected in red.
Consider a maximal set of vertex disjoint red copies of $K_r$.
We call this set of cliques $\cR$ and its vertex set $R$.
Note that $|R| \leq r(n-1)$, otherwise we would be done.
Greedily looking for vertex disjoint blue copies of $K_r$ on the rest of the graph results in covering all but at most $R(K_r)-1$ vertices of the graph with monochromatic copies of $K_r$.
Call this set of uncovered vertices $Z$.

Let $B$ denote the vertices covered by these blue cliques.
We next partition $B$ into blue connected components.

Between components all edges are red, so there cannot be more than $r-1$ such components or we would have a new red $K_r$ to add to $\cR$.
This would contradict the maximality of $\cR$.
There also cannot be fewer than $r-1$ such components, or, since each component is no larger than $r(n-1)$, we would have 
\[|B|+|R|+|Z| \leq (r-2)r(n-1)+ r(n-1) +R(K_r)-1 = (r^2 -r)n -r^2 +r -1 +R(K_r),\]
which is a contradiction, since for $n>R(K_r)-r^2 +2r -2$ this is less than $N$.
Therefore there must be exactly $r-1$ blue components and we call them $B'_1,\ldots ,B'_{r-1}$.
If any of these components contained a red edge we would have another red $K_r$ to add to $\cR$, and so each $B'_i$ is a blue clique.
Since the blue components are disconnected, each vertex in $Z$ can be adjacent in blue to vertices of at most one $B'_i$.
If any vertex in $Z$ had a red neighbour in each $B'_i$ we would have a new red $K_r$ to add to $\cR$.
Therefore all vertices in $Z$ are adjacent to $r-2$ of the $B'_i$ entirely in red and the remaining blue component entirely in blue.
For each $i=1,\ldots,r-1$, form $B_i$ by adding to $B'_i$ the vertices of $Z$ that are adjacent in blue to all of $B'_i$.
Note that there cannot be any red edges in any $B_i$ or we would have a new red $K_r$ to add to $\cR$ and also between any two distinct $B_i$ all edges are red since they are not blue connected.

We now consider the colour of edges between the cliques of $\cR$ and the sets $B_1,\ldots,B_{r-1}$.
First, recalling that all vertices in $R$ are adjacent in blue to vertices in at most one $B_i$, we say a vertex of $R$ is \emph{paired with $B_i$} if it is adjacent in red to all $B_j$ for all $j \neq i$.
The following claim identifies two possible properties of cliques of $\cR$.

\begin{claim}\label{cl:cliquetypes}
Let $C$ be a red $K_r$ from $\cR$.  Then one of the following holds:
\begin{enumerate}
\item[(i)]\label{cl:it:1} For each $B_i$ there is a vertex in $C$ adjacent in blue to all but at most one vertex of $B_i$.
\item[(ii)]\label{cl:it:2} For all but two values of $i$ there is exactly one vertex of $C$ which is adjacent in blue to all vertices of $B_i$.  Furthermore there is a $j$ such that the three remaining vertices of $C$ are adjacent to all of $B_j$.
\end{enumerate}

\end{claim}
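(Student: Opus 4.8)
The plan is to prove the dichotomy by contradiction, exploiting the maximality of $\cR$ in the following \emph{swapping} form. If the vertex set $V(C)\cup B_1\cup\cdots\cup B_{r-1}$ contains two vertex-disjoint red copies of $K_r$, then, since these lie outside $R\setminus V(C)$, the family $\cR\setminus\{C\}$ together with these two cliques would be a collection of $|\cR|+1$ pairwise disjoint red copies of $K_r$, contradicting the maximality of $\cR$. Thus the whole statement reduces to showing that whenever \emph{neither} (i) nor (ii) holds, such a pair of red $K_r$'s can be built.

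The key structural input is that each $B_i$ has size up to $r(n-1)$, whereas any red $K_r$ we construct uses at most one vertex of each $B_i$ (two vertices of the same $B_i$ are joined in blue), so across two cliques we select at most two vertices of any fixed $B_i$. Consequently, the only genuine obstruction to completing a subset $S\subseteq V(C)$ to a red clique by a vertex of $B_i$ red to all of $S$ is that the blue neighbourhoods within $B_i$ of the vertices of $S$ already cover $B_i$; in particular a single vertex $c$ that is blue-complete to $B_i$ blocks this, while if $c$ has at least two red neighbours in $B_i$ it can always be accommodated even after one vertex of $B_i$ has been used elsewhere. I would encode this by an auxiliary bipartite graph $\Gamma$ on parts $V(C)$ and $\{B_1,\dots,B_{r-1}\}$, joining $c$ to $B_i$ precisely when $c$ has at least two red neighbours in $B_i$. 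With this encoding, condition (i) says exactly that every $B_i$ fails to be joined in $\Gamma$ to some vertex of $C$, i.e.\ has a vertex of $C$ adjacent in blue to all but at most one of its vertices.

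Now assume (i) fails, so some clique, say $B_1$, is joined in $\Gamma$ to every vertex of $C$. The concrete construction I would try is the $(1,r-1)$ split of $V(C)$: pick $c^\ast\in V(C)$ and form one red $K_r$ from $c^\ast$ together with one red neighbour of $c^\ast$ in each $B_i$ (possible iff $c^\ast$ is not blue-complete to any $B_i$), and a second red $K_r$ from the remaining $r-1$ vertices $V(C)\setminus\{c^\ast\}$ together with a single vertex of some $B_j$ red to all of them. To block this for \emph{every} choice of $c^\ast$, and likewise to block the analogous $(k,r-k)$ splits, the colouring is forced to be rigid: a vertex of $V(C)\setminus\{c^\ast\}$ that is blue-complete to $B_j$ already prevents any vertex of $B_j$ from being red to all of $V(C)\setminus\{c^\ast\}$, so blocking the second clique at a common $B_j$ as $c^\ast$ varies requires several vertices of $C$ to be simultaneously blue-complete to that $B_j$. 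Feeding this back through a K\"onig/deficiency count of $\Gamma$ — with $r$ vertices of $C$ against only $r-1$ cliques — pins the structure down to exactly case (ii): $r-3$ of the cliques receive a single dedicated blue-complete vertex, the three leftover vertices of $C$ pile onto one exceptional clique $B_j$, and one further clique is left with none.

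The hard part will be the bookkeeping in this last step: carrying out the deficiency analysis uniformly over all splits of $V(C)$ and isolating the \emph{two} exceptional indices, while repeatedly verifying that the required red neighbours in the various $B_i$ exist as soon as no relevant vertex of $C$ is blue-complete to the clique in question (this is where the large size of the $B_i$ and the ``all but at most one'' slack built into $\Gamma$ are used, allowing a clique to be reused across both red copies). The hypothesis $r\ge 4$ should enter precisely to guarantee that, once three vertices of $C$ are absorbed by $B_j$, there genuinely remains an exceptional clique with no dedicated vertex, accounting for the ``two values of $i$'' in (ii).
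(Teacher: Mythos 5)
Your high-level mechanism is the same as the paper's: exploit the maximum size of $\cR$ by showing that a violating clique $C$ could be traded for \emph{two} vertex-disjoint red copies of $K_r$ inside $V(C)\cup B_1\cup\dots\cup B_{r-1}$. But there are two genuine gaps. First, you never invoke the pivotal structural fact that the sets $B_1,\dots,B_{r-1}$ lie in distinct blue components, so that every vertex of $C$ has blue neighbours in at most one $B_i$ and is red-complete to all the others. The paper's proof rests entirely on this: it lets one assign to each vertex of $C$ a single \emph{paired} set $B_i$, and the whole claim then reduces to counting how many vertices of $C$ are paired with each $B_i$ (no two sets can each receive two paired vertices, no set can receive four, and a separate easy case handles a paired vertex having a red neighbour inside its own paired set). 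This forces the multiplicity pattern $(2,1,\dots,1)$, giving (i), or $(3,1,\dots,1,0)$, giving (ii). Without this fact your encoding $\Gamma$ is too lossy: for a set $S\subseteq V(C)$ with $|S|\geq 2$, knowing that each vertex of $S$ has at least two red neighbours in $B_j$ does \emph{not} produce a vertex of $B_j$ red to all of $S$ --- the blue neighbourhoods of the vertices of $S$ may cover $B_j$ without any single vertex being blue-complete to it. This is exactly what you must control for the second clique in your $(1,r-1)$ split (it needs one vertex of some $B_j$ red to all $r-1$ remaining vertices of $C$), so your assertion that blocking it ``requires several vertices of $C$ to be simultaneously blue-complete to that $B_j$'' is unjustified as stated.

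Second, the step that would actually establish the dichotomy --- the ``K\"onig/deficiency count'' over all $(k,r-k)$ splits that is supposed to pin the structure down to case (ii) --- is never carried out; you explicitly defer it as ``the hard part''. That deferred bookkeeping is precisely the content of the claim, and in your setup it is not routine, for the reason above. In the paper it collapses to a short case analysis only after the blue-component observation is in place. As written, the proposal is a strategy outline whose key combinatorial step is both missing and, with the $\Gamma$-encoding alone, not completable.
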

\begin{claimproof}
Consider some $C \in \cR$.
Each vertex in $C$ can have blue neighbours in at most one $B_i$ and so is entirely adjacent in red to the remaining $r-2$ blue components.

Suppose firstly that some $c_i \in C$ is paired with $B_i$ but also has at least one neighbour in red in $B_i$.
Then if two other vertices $c_j,c'_j \in C$ were paired with the same $B_j$, we could break up $C$ to make two new red copies of $K_r$, contradicting the maximality of $\cR$.
One of these red cliques uses $c_i$, its red neighbour in $B_i$ and vertices of $B_k$ for $k \neq i$.
The other uses $c_j$, $c'_j$ and vertices of each $B_k$ for $k \neq j$.
Therefore, if such a $c_i \in C$ exists, all remaining vertices of $C$ are paired with distinct components $B_j$.
Furthermore, if any of them has more than one red neighbour in the set they are paired with, we could make two new red copies of $K_r$, again contradicting the maximality of $\cR$.
One of these would use $c_i$ as before, and the other would use the other vertex with red neighbours in the set it is paired with.
Thus, if $c_i$ as above exists, $(i)$ holds.

Secondly, suppose all vertices in $C$ are adjacent entirely in blue to the set they are paired with.
There cannot be two vertices $c_i, c'_i$ paired with some $B_i$ and another two vertices $c_j, c'_j$ paired with some $B_j$.
If there were, we would create two new red copies of $K_r$ using $c_i$, $c'_i$ and a vertex from each $B_k$ with $k \neq i$ for one, and $c_j$, $c'_j$ and a vertex from each $B_k$ with $k \neq j$ for the other.
This would again break the maximality of $\cR$.
There also cannot be four or more vertices paired with the same $B_i$, or we could use two pairs of them to make two new red copies of $K_r$ along with vertices from $B_k$ for $k \neq i$.
Therefore we either have some $B_i$ with two vertices of $C$ paired with it and every other $B_j$ has one vertex paired with them, as in $(i)$, or we have some $B_i$ with three vertices paired with it and all but one of the remaining components $B_j$ have one vertex paired with them, as in $(ii)$.
\end{claimproof}

The next claim tells us that if we have two sets of three vertices from cliques of $\cR$ that are paired with the same $B_i$, then the edges between these two sets are all blue.
\begin{claim}\label{cl:bluebetween}
Let $C$ and $C'$ be cliques of $\cR$ with vertices $x_1,x_2,x_3 \in C$ and $y_1,y_2,y_3 \in C'$ all paired with the same $B_i$.
Then all the edges $x_jy_k$ for $j,k \in \{1,2,3\}$ are blue.
\end{claim}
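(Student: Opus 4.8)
The plan is to argue by contradiction against the maximality of $\cR$. Suppose some edge $x_jy_k$ were red; by the symmetry of the three $x$'s inside $C$ and the three $y$'s inside $C'$ we may assume that $x_1y_1$ is red. Since each of $x_1,x_2,x_3,y_1,y_2,y_3$ is paired with $B_i$, every one of these six vertices is joined entirely in red to each blue clique $B_\ell$ with $\ell \neq i$. Moreover all edges between distinct blue cliques are red, the vertices $x_1,x_2,x_3$ are mutually red-adjacent (they lie in the red clique $C$), and likewise $y_1,y_2,y_3$ are mutually red-adjacent. I would exploit exactly these red adjacencies to reassemble $C$ and $C'$, together with a few vertices taken from the cliques $B_\ell$ with $\ell \neq i$, into \emph{three} disjoint red copies of $K_r$ in place of the two we started with.

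Concretely, for each of the $r-2$ indices $\ell \neq i$ pick three distinct vertices of $B_\ell$ and distribute them, one per $\ell$, among the three sets
\begin{align*}
K^{(1)} &= \{x_1,y_1\} \cup \{b^{(1)}_\ell : \ell \neq i\},\\
K^{(2)} &= \{x_2,x_3\} \cup \{b^{(2)}_\ell : \ell \neq i\},\\
K^{(3)} &= \{y_2,y_3\} \cup \{b^{(3)}_\ell : \ell \neq i\},
\end{align*}
each of which has $2+(r-2)=r$ vertices. I would then check that every one of these is a red $K_r$. In $K^{(1)}$ the edge $x_1y_1$ is red by assumption, while every remaining edge is red because $x_1,y_1$ are paired with $B_i$ and because distinct blue cliques send only red edges between them. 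In $K^{(2)}$ and $K^{(3)}$ the single non-$B$ edge, namely $x_2x_3$ respectively $y_2y_3$, is red since it lies inside $C$ respectively $C'$, and all other edges are red for the same two reasons as before.

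Finally I would observe that $K^{(1)},K^{(2)},K^{(3)}$ are pairwise disjoint, since the six vertices $x_1,x_2,x_3,y_1,y_2,y_3$ are distinct and the chosen $b^{(m)}_\ell$ were taken distinct within each $B_\ell$, and that they are disjoint from every clique of $\cR$ other than $C$ and $C'$, because those cliques lie in $R$ and so avoid $B$. Hence $(\cR \setminus \{C,C'\}) \cup \{K^{(1)},K^{(2)},K^{(3)}\}$ is a collection of $|\cR|+1$ vertex-disjoint red copies of $K_r$, contradicting the maximality of $\cR$; therefore no $x_jy_k$ is red and the claim follows. The one point to watch is the clique bookkeeping: the reassembly must yield strictly more red $K_r$'s than it consumes (three for two), which is precisely what pairing the forced red edge $x_1y_1$ with the two ``free'' red edges $x_2x_3$ and $y_2y_3$ accomplishes, and I would also note that the required three distinct vertices in each $B_\ell$ are available because the blue cliques are large in the regime under consideration.
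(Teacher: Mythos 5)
Your proof is correct and takes essentially the same approach as the paper: assuming $x_1y_1$ is red, both arguments trade $C$ and $C'$ for three disjoint red copies of $K_r$ built on the pairs $\{x_1,y_1\}$, $\{x_2,x_3\}$, $\{y_2,y_3\}$ together with one vertex from each $B_\ell$ with $\ell \neq i$, contradicting the maximality of $\cR$. Your additional bookkeeping (distinct choices within each $B_\ell$, which is fine since each blue component contains a blue $K_r$ and so has at least $r \geq 4$ vertices) only makes explicit what the paper leaves implicit.
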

\begin{claimproof}
Suppose for contradiction and without loss of generality that $x_1y_1$ is red.
Then we could create three new red copies of $K_r$ at the cost of $C$ and $C'$, contradicting the maximality of $\cR$.
These three copies of $K_r$ would use the pairs of vertices $\{x_1,y_1\}$, $\{x_2,x_3\}$ and $\{y_2,y_3\}$ along with vertices from $B_j$ for $j \neq i$.
\end{claimproof}

We now use Claim~\ref{cl:cliquetypes} to partition the red cliques of $\cR$ depending on which option of the claim they satisfy.
Let $S \subset \cR$ be the set of cliques satisfying $(i)$, and $T \subset \cR$ be the set of cliques satisfying $(ii)$.
For each $C \in S$ and each $B_i$ there is at least one vertex of $C$ which is adjacent in blue to all but at most one vertex of $B_i$.
For each $i$, construct $S_i$ by selecting one such vertex from each $C \in S$.
For each $C \in T$, all vertices are adjacent entirely in blue to exactly one $B_i$.
For each $i$, construct $T_i$ by taking the vertices of each clique of $T$ that are entirely adjacent in blue to $B_i$.
Let $D_i = B_i \cup S_i \cup T_i$.
We further split up $T_i$, into sets $T_i^*$ and $T_i^\Delta$, depending on whether one or three vertices from that red $K_r$ were added to $T_i$.
Given a vertex $u \in T_i$ let $C$ be the clique of $T$ containing $u$.
If $u$ is the only vertex of $C$ belonging to $T_i$ then $u$ belongs to $T_i^*$.
Otherwise three vertices of $C$ must belong to $T_i$ in which case all three belong to $T_i^\Delta$.
Observe that $|T_i|=|T_i^*|+|T_i^\Delta|$ and $n-|S_i|-|T_i^*|\geq \frac13 |T_i^\Delta|+1$.

We shall find a blue connected copy of $nK_r$ on a $D_i$ such that either $|D_i|\geq rn+1$, or $|D_i|\geq rn$ and $|T_i^\Delta| \neq 3$.
We first proceed to show that such a $D_i$ exists.

Because the $D_i$ for $i=1,\ldots,r-1$ cover all vertices of the graph except for one from each clique of $S$, we have
\begin{equation*}
N=(r^2 -r +1)n-r+1=|S|+ \sum_{i=1}^{r-1} |D_i| \,.
\end{equation*}

By averaging and using $|S|\leq n-1$, there exists an $i$ such that
\begin{equation*}
|D_i|\geq \bigg(r + \frac{1}{r-1}\bigg)n -1 -\frac{|S|}{r-1}\geq rn -1 +\frac{1}{r-1}\,.
\end{equation*}
Since $|D_i|$ is an integer it is at least $rn$.
Furthermore if $|S|\leq n-r$ it is at least $rn+1$.
Therefore if $|S|\leq n-r$ we can find a suitable $D_i$.
If $|S|$ is larger than this, the next claim shows that either we still have a $D_i$ with $|D_i|\geq rn+1$ or we have at least two of size at least $rn$.
In the latter case, we can find one of these with $|T_i^\Delta|\neq 3$.
\begin{claim}\label{cl:advave}
Suppose $|S|=n-\ell$ for some $1\leq \ell \leq r-1$.  Then either there exist $\ell$ values of $i$ for which
$|D_i| \geq rn$
or there exists one value of $i$ for which $|D_i| \geq rn +1$.
\end{claim}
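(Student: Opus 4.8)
The plan is to run a sharpened pigeonhole argument directly off the counting identity $N = |S| + \sum_{i=1}^{r-1}|D_i|$ established just above the claim. First I would substitute $|S| = n - \ell$ and $N = (r^2 - r + 1)n - r + 1$ to rewrite the total size of the $D_i$ in a clean closed form. A short computation gives
\[
\sum_{i=1}^{r-1} |D_i| = N - |S| = (r^2 - r)n - r + 1 + \ell = (r-1)(rn - 1) + \ell,
\]
using $(r^2 - r)n = r(r-1)n$ and $-r + 1 = -(r-1)$. The point of this rewriting is that the right-hand side is exactly $(r-1)$ copies of $rn - 1$ plus a surplus of $\ell$, which is what makes the two alternatives in the claim natural.

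Next I would argue by contradiction, assuming that neither conclusion holds. The negation of the first alternative is that at most $\ell - 1$ of the indices $i$ satisfy $|D_i| \geq rn$; the negation of the second is that every $|D_i| \leq rn$. Combining these, at most $\ell - 1$ of the $|D_i|$ can equal $rn$, while the remaining at least $r - \ell$ of them are at most $rn - 1$. To maximise the total under these constraints I would take exactly $\ell - 1$ terms equal to $rn$ and the other $r - \ell$ terms equal to $rn - 1$, giving
\[
\sum_{i=1}^{r-1} |D_i| \leq (\ell - 1)\,rn + (r - \ell)(rn - 1) = (r-1)rn - (r - \ell).
\]

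Finally I would compare the two expressions for the sum. The closed form being at most the upper bound would force $(r-1)(rn - 1) + \ell \leq (r-1)rn - (r - \ell)$, i.e.\ $-(r - 1) + \ell \leq -(r - \ell)$, which simplifies to $1 \leq 0$, a contradiction. Hence one of the two alternatives must hold. I expect no serious obstacle here: the whole argument is elementary bookkeeping, and the only thing to be careful about is the one-unit slack distinguishing the thresholds $rn$ and $rn + 1$ — it is precisely this slack that lets the surplus $\ell$ be absorbed and produces the clean contradiction. One should also note that $1 \leq \ell \leq r-1$ guarantees $\ell - 1 \geq 0$ and $r - \ell \geq 1$, so the extremal configuration used in the bound is legitimate.
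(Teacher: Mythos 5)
Your proof is correct and is essentially the paper's own argument: both negate the disjunction, bound the sum by taking $\ell-1$ of the $|D_i|$ at $rn$ and the remaining $r-\ell$ at $rn-1$, and derive a contradiction with the identity $N = |S| + \sum_{i=1}^{r-1}|D_i|$, the total falling short of $N$ by exactly one. The only difference is cosmetic bookkeeping (you compare $\sum|D_i|$ with $N-|S|$ rather than $|S|+\sum|D_i|$ with $N$).
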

\begin{claimproof}
Suppose for contradiction that the $\ell -1$ largest sets $D_i$ all have at most $rn$ vertices whilst all others have at most $rn-1$.
This gives
\[|S|+\sum_{i=1}^{r-1}|D_i| \leq n-\ell +(\ell-1)rn +(r-\ell)(rn-1)=(r^2-r+1)n-r <N\]
achieving a contradiction.
\end{claimproof}
Since $|T|\leq n-1 -|S|$, we see $|T| \leq \ell -1 $, and so if $|S|\geq n-r+1$ we can find a value of $i$ such that either $|D_i|$ is at least $rn+1$ or $|D_i|\geq rn$ and $|T_i^\Delta| \neq 3$.

We begin using only the assumption $|D_i|\geq rn$. 
Then \[ |B_i|\geq rn-|S_i|-|T_i^*|-|T_i^\Delta|\]
and so $|B_i|-(r-1)(|S_i|+|T_i^*|) \geq r(n-|S_i|-|T_i^*|)-|T_i^\Delta|\geq (\frac{r}{3}-1)|T_i^\Delta| +r \geq 1$.
Since all vertices of $S_i \cup T_i^*$ are adjacent in blue to all but at most one vertex of $B_i$, we can extend all vertices of $S_i \cup T_i^*$ to disjoint blue copies of $K_r$ using $B_i$.
We now look to find $n-|S_i|-|T_i^*|$ disjoint blue copies $K_r$ on the remaining vertices of $B_i$ and $T_i^\Delta$.
Let $\tilde{B_i}$ denote the remaining vertices of $B_i$, noting that $|\tilde{B_i}| \geq (\frac{r}{3}-1)|T_i^\Delta|+r$.
Recall that edges from $T_i^\Delta$ to $B_i$ are blue and between different red triangles of $T_i^\Delta$ edges are blue.

If $|T_i^\Delta|=0$, then since $B_i$ is a blue clique we can find the remaining blue copies of $K_r$ entirely on the rest of $B_i$.

Remembering that $|T_i^\Delta|$ is a multiple of three, suppose $|T_i^\Delta|\geq 6$.  If $|T_i^\Delta|\geq 3r$, we first take blue copies of $K_r$ on $T_i^\Delta$ such that the vertices of $T_i^\Delta$ that are not covered by these blue cliques consist of $t$ red triangles with $2\leq t \leq r-1$.
We then cover the rest of $T_i^\Delta$ with blue copies of $K_r$ by greedily taking one vertex from each of the $t$ red triangles and extending this set of $t$ vertices to a blue $K_r$ using vertices from $\tilde{B_i}$.
In this process three blue copies of $K_r$ use both vertices of $T_i^\Delta$ and $\tilde{B_i}$, one for each vertex of the red triangles, and each such blue $K_r$ used $r-t$ vertices from $\tilde{B_i}$.
So long as $|\tilde{B_i}|\geq 3(r-t)$, this greedy procedure is successful.
Since we have $|\tilde{B_i}| \geq (\tfrac{r}{3}-1)|T_i^\Delta|+r$, we also have $|\tilde{B_i}|\geq 3t(\tfrac{r}{3}-1)+r$, which is at least $3(r-t)$ for $t \geq 2$.

Finally, suppose $|T_i^\Delta|=3$. Using Claim~\ref{cl:advave} we may assume $|D_i|\geq rn+1$. 
We have that $|\tilde{B_i}|\geq rn+1 -(r-1)(|S_i|+|T_i^*|) \geq \big(\tfrac{r}{3}-1\big)|T_i^\Delta| +r+1=2(r-1)$, and so we can extend two of the vertices of $T_i^\Delta$ to blue copies of $K_r$ using $\tilde{B_i}$. If necessary we then cover the rest of $\tilde{B_i}$ with more copies of $K_r$.
This completes the proof.
\end{proof}
\bibliographystyle{plain}
\bibliography{Bib3}
\end{document}